\documentclass[11pt]{article}
\usepackage{amsthm, amsmath, amssymb, amsfonts, url, booktabs, tikz, setspace, fancyhdr, bm}
\usepackage{geometry}
\usepackage{hyperref, enumerate}
\usepackage[shortlabels]{enumitem}
\usepackage[babel]{microtype}
\usepackage[english]{babel}
\usepackage[capitalise]{cleveref}
\usepackage{comment}
\usepackage{bbm}
\usepackage{csquotes}
\usepackage{mathabx}
\usepackage{tikz}
\usepackage{graphicx}
\usepackage{float}
\usepackage{xcolor}
\usepackage{mathtools}
\usepackage{mathrsfs}

\usetikzlibrary{positioning, arrows.meta, shapes.geometric}

\counterwithin{figure}{section}

\newtheorem{theorem}{Theorem}[section]

\newtheorem{lemma}[theorem]{Lemma}
\newtheorem{cor}[theorem]{Corollary}

\newtheorem{claim}[theorem]{Claim}

\usetikzlibrary{decorations.pathmorphing}

\theoremstyle{definition}

\newtheorem*{defn-non}{Definition}

\newlist{Case}{enumerate}{2}
\setlist[Case, 1]{%
    label           =   {\bfseries Case \arabic*.},
    labelindent=1em ,labelwidth=1.3cm, labelsep*=1em, leftmargin =!
}
\setlist[Case, 2]{%
    label           =   {\bfseries Subcase \arabic{Casei}.\arabic*.},
    labelindent=-1em ,labelwidth=1.3cm, labelsep*=1em, leftmargin =!
}

\newenvironment{poc}{\begin{proof}[Proof of the claim]}{\end{proof}}

\usepackage{todonotes}

\title{A solution to a strengthened conjecture of Bukh, van Hintum and Keevash on additive bases} 
\author{
Zixiang Xu\thanks{Email: zxxu8023@qq.com.}
}

\date{}
\begin{document}
\maketitle
\begin{abstract}
Motivated by the change-of-domain problem for additive bases, Bukh, van Hintum and Keevash conjectured that if \(A,B\subseteq \mathbb{Q}^{n}\) and
\(
    \{\boldsymbol{e}_i+\boldsymbol{e}_j:1\le i\le j\le n\}\subseteq A+B,
\)
then \(|A|+|B|\ge 2n\). They further proposed the strengthened conjecture: if
\(
    |A|=n-t\), then \(|B|\ge n+\binom{t+1}{2}.
\)
Bukh also explicitly asked whether the same bounds hold for \(A,B\subseteq \mathbb{R}^{n}\) and an arbitrary basis \(S\) of \(\mathbb{R}^{n}\), under the assumption \(S+S\subseteq A+B\).

We prove the full strengthened statement over \(\mathbb{R}^{n}\): if \(S+S\subseteq A+B\) and \(|A|\le n-t\) with \(0\le t\le n-1\), then
\(
    |B|\ge n+\binom{t+1}{2},
\)
which is sharp for every basis \(S\) and every \(0\le t\le n-1.\)
 The proof is short, using edge contractions in a graph-theoretical framework and a new coloring lemma over \(\mathbb F_2^n\).

\end{abstract}

\section{Introduction}
A basic topic in additive number theory and additive combinatorics is to understand how efficiently a set can be generated by sums. Classical instances include Waring's problem~\cite{1964Sinica,1909Hilbert} and Goldbach-type problems~\cite{1973Sinica,2015Helfgott,1937Vinogradov}.  In finite settings, initiated by Erd\H{o}s and Newman~\cite{1977ErdosNewman}, one asks how small a set \(B\) can be while still satisfying \(A\subseteq B+B\), or more generally \(A\subseteq kB\).  This viewpoint has led to connections with extremal and geometric problems in additive combinatorics, including discrete Kakeya-type phenomena~\cite{2009IsrABS}; see also~\cite{1996Nathanson,2006Book}.

Recently, Bukh, van Hintum and Keevash~\cite{2025ATCAarith}, motivated by questions of Ruzsa~\cite{2024Ruzsa}, studied a natural change-of-domain problem: how does the minimum size of an additive basis change when the basis is required to lie in a more restrictive domain?  In the rational-versus-integer setting, they proved that every rational basis of size \(n\) can be replaced by an integer basis of size at most \(2n\), and constructed examples showing that this factor \(2\) is tight up to a lower-order error.

A key ingredient in their argument is a linear-algebraic vector model. Let \(\boldsymbol{e}_{1},\ldots,\boldsymbol{e}_{n}\) be the standard basis of \(\mathbb{R}^{n}.\) In the case of \(2\)-bases, this model asks for the minimum possible value of \(|A|+|B|\), where \(A,B\subseteq \mathbb{Q}^{n}\) and
\[
    \{\boldsymbol{e}_i+\boldsymbol{e}_j:1\le i\le j\le n\}\subseteq A+B.
\]
Bukh, van Hintum and Keevash~\cite{2025ATCAarith} proved the lower bound \(2n-O(\sqrt n)\) for \(|A|+|B|\) and conjectured that the sharp answer is exactly \(2n\)~\cite[Conjecture~13]{2025ATCAarith}.  They further proposed the refined extension that if \(|A|=n-t\), then one should have
\(
    |B|\ge n+\binom{t+1}{2}.
\)
On his open problem list, Bukh~\cite[Problem~12]{BukhProblems} asked explicitly whether this entire conjectural picture persists over \(\mathbb{R}^{n}\): if \(S\) is a basis of \(\mathbb{R}^{n}\) and \(A,B\subseteq \mathbb{R}^{n}\) satisfy \(S+S\subseteq A+B\), do the same bounds still hold?

Our main result answers this question affirmatively, including the refined extension.

\begin{theorem}\label{thm:main}
Let \(S\) be a basis of \(\mathbb{R}^{n}\), and let \(A,B\subseteq \mathbb{R}^{n}\). Suppose that
\(
    S+S\subseteq A+B.
\)
Let \(t\) be an integer with \(0\le t\le n-1\). If
\(
    |A|\le n-t,
\)
then
\(
    |B|\ge n+\binom{t+1}{2}.
\)
\end{theorem}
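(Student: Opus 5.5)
We first reduce to the case where \(S=\{\boldsymbol e_1,\dots,\boldsymbol e_n\}\) is the standard basis. Any basis is the image of the standard one under an invertible linear map \(L\), and applying \(L^{-1}\) to \(A\) and \(B\) preserves the hypothesis and the sizes. So assume \(\boldsymbol e_i+\boldsymbol e_j\in A+B\) for all \(i\le j\). For each pair choose a representation \(\boldsymbol e_i+\boldsymbol e_j=a_{ij}+b_{ij}\). Build an auxiliary bipartite multigraph \(G\) on vertex set \(A\sqcup B\) with one edge \(a_{ij}b_{ij}\) labelled \(\{i,j\}\) for every pair, loops \(i=j\) included. This gives \(\binom{n+1}{2}\) labelled edges.

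The key linear observation concerns a closed walk \(a_1b_1a_2b_2\cdots a_kb_ka_1\) in \(G\). The alternating sum of the corresponding labels \(\boldsymbol e_i+\boldsymbol e_j\) must vanish, because every vertex vector cancels. Thus every cycle of \(G\) imposes an integer linear relation among the vectors \(\boldsymbol e_i+\boldsymbol e_j\). Reducing modulo \(2\) is where \(\mathbb F_2^n\) enters: the label multiset along any cycle must have every coordinate appearing an even number of times. Equivalently, assign each edge the vector \(\boldsymbol e_i+\boldsymbol e_j\in\mathbb F_2^n\). Then the edge map \(\mathbb F_2^{E(G)}\to\mathbb F_2^n\) kills the cycle space.

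A standard count now applies. Let \(c\) be the number of components of \(G\), so the cycle space has dimension \(|E|-|A|-|B|+c\). The plan is to compare this with the dimension of the relations that the labels actually satisfy.

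The natural direct bound \(|E|-|V|+c\le \dim\ker\) is too weak as stated, since the span of \(\{\boldsymbol e_i+\boldsymbol e_j\}\) over \(\mathbb F_2\) already has full dimension. So the plan is to use edge contractions instead. Process the components of \(G\). Within a component, contract a spanning tree. Every vertex in a component is then determined, up to a translate, by a single base vector: fix \(a_0\), and each other vertex equals \(\pm a_0\) plus an integer combination of the \(\boldsymbol e\)'s. Hence each component carries an \(\mathbb F_2\)-``coloring'' of its vertices, namely the parity class of the offset. The lemma I would aim to prove is the following coloring lemma. Suppose each vertex of \(A\) receives a vector \(\alpha(a)\in\mathbb F_2^n\) and each vertex of \(B\) receives \(\beta(b)\), such that \(\alpha(a)+\beta(b)=\boldsymbol e_i+\boldsymbol e_j\) whenever \(ab\) carries label \(\{i,j\}\). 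If the labels exhaust all pairs \(i\le j\), then \(|\alpha(A)|\le n-t\) forces \(|\beta(B)|\ge n+\binom{t+1}{2}\).

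This lemma is a purely finite statement. Over \(\mathbb F_2\) the sets \(\alpha(A)\) and \(\beta(B)\) satisfy \(\{\boldsymbol e_i+\boldsymbol e_j\}\subseteq \alpha(A)+\beta(B)\), and the diagonal terms become \(0\). I would prove it by induction on \(n\). Fix \(a\in\alpha(A)\). The translate \(a+\beta(B)\) covers some set of pairs, which we view as a graph \(H_a\) on \([n]\) together with the vertices \(i\) for which \(0\) was covered. One must also handle how real-valued \(A,B\) collapse modulo \(2\): distinct real vectors can share a parity class. To avoid this loss, work with components separately and use the contraction step so that each component contributes honestly.

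The main obstacle is the coloring lemma, specifically obtaining the exact \(\binom{t+1}{2}\). The plan there is to pick the element \(a\in\alpha(A)\) whose translate covers the most diagonal indices, delete a coordinate \(i\) covered only by it, and project. This reduces \(n\) by one and either \(|A|\) by one, which keeps \(t\) fixed and costs one element of \(B\), or keeps \(|A|\) fixed, which lowers \(t\) by one and costs \(t+1\) elements of \(B\). These costs sum to \(n+\binom{t+1}{2}\). Sharpness follows by taking \(A=\{0\}\cup\{\boldsymbol e_i\}\) on \(n-t-1\) coordinates and letting \(B\) contain the \(n\) singletons together with the \(\binom{t+1}{2}\) needed pair sums on the remaining \(t+1\) coordinates.
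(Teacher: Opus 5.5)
Your central coloring lemma is false, so the plan cannot be completed as stated. Reducing modulo \(2\) sends every diagonal label \(2\boldsymbol e_i\) to \(\boldsymbol 0\), and this discards exactly the information carried by the \(n\) diagonal representations.

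Take \(n=2\), \(A=\{\boldsymbol a\}\), \(B=\{2\boldsymbol e_1-\boldsymbol a,\,2\boldsymbol e_2-\boldsymbol a,\,\boldsymbol e_1+\boldsymbol e_2-\boldsymbol a\}\). This is a genuine instance with \(t=1\), and all three labels occur. Your parity coloring gives \(\alpha(A)=\{\boldsymbol 0\}\) and \(\beta(B)=\{\boldsymbol 0,\boldsymbol e_1+\boldsymbol e_2\}\), so \(|\beta(B)|=2<3=n+\binom{t+1}{2}\). More generally, put \(D_n=\{\boldsymbol e_i+\boldsymbol e_j:i<j\}\). Then \(\alpha(A)=\{\boldsymbol 0\}\) and \(\beta(B)=\{\boldsymbol 0\}\cup D_n\) satisfy your hypotheses with only \(1+\binom n2\) colors on \(B\).

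The collapse happens inside a single component, so handling components separately does not help. Your inductive sketch (deleting a coordinate costs \(1\) or \(t+1\) elements of \(B\)) has no supporting argument, and it cannot succeed for a false statement. No bound on the colors of \(B\) alone will give the theorem.

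What is missing is to use the diagonal representations over \(\mathbb R\) before passing to parities, and then to count colors on both sides together.
\begin{itemize}
\item \textbf{Forest.} By linear independence, the \(n\) edges \(\varepsilon_{ii}=\boldsymbol a_{ii}\boldsymbol b_{ii}\) form a forest in \(G\). A cycle through distinct \(\varepsilon_{i_ki_k}\) would give \(\sum_k\pm2\boldsymbol e_{i_k}=\boldsymbol 0\).
\item \textbf{Contraction.} Contract exactly these edges, not a spanning tree of each component. The resulting graph \(H\) has exactly \(|A|+|B|-n\) vertices. Your coloring is constant on contracted classes, because \(\alpha(\boldsymbol a)=\beta(\boldsymbol b)\) whenever \(\boldsymbol a+\boldsymbol b=2\boldsymbol e_i\).
\item \textbf{Color sets.} Let \(\mathcal C\) be the set of all colors on \(H\), and \(\mathcal X\subseteq\mathcal C\) the colors of classes containing a vertex of \(A\). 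Then \(|\mathcal X|\le n-t\) and \(D_n\subseteq\mathcal X-\mathcal C\). Your per-component base points are a legitimate substitute for the paper's folding of cosets in \(\mathbb R^n/2\mathbb Z^n\).
\item \textbf{Key lemma: \(|\mathcal C|\ge n+\binom t2\).} Translate so that \(\boldsymbol 0\in\mathcal X\), set \(m=|\mathcal X|\), and let \(W=\operatorname{span}\mathcal X\), so \(\dim W\le m-1\). Then \(\mathcal C\) has at least \(m\) points in \(W\). Every element of \(D_n\) is congruent mod \(W\) to an element of \(\mathcal C\), so \(\mathcal C\) meets at least \(\binom s2\) nonzero cosets of \(W\), where \(s=n-\dim W\ge n-m+1\). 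This gives \(|\mathcal C|\ge m+\binom{n-m+1}{2}=n+\binom{n-m}{2}\ge n+\binom t2\).
\item \textbf{Conclusion.} \(|A|+|B|-n\ge|\mathcal C|\ge n+\binom t2\), and \(|A|\le n-t\) then gives \(|B|\ge n+\binom{t+1}{2}\).
\end{itemize}

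Separately, your sharpness example leaves \(2\boldsymbol e_i\) uncovered for the last \(t+1\) coordinates. The paper uses \(A=\{\boldsymbol s_1,\dots,\boldsymbol s_{n-t}\}\) and \(B=S\cup\{\boldsymbol s_i+\boldsymbol s_j-\boldsymbol s_1:n-t<i\le j\le n\}\).
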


The bound is sharp for every basis \(S=\{\boldsymbol{s}_1,\dots,\boldsymbol{s}_n\}\) and \(0\le t\le n-1\).  Indeed, take
\(
    A=\{\boldsymbol{s}_1,\dots,\boldsymbol{s}_{n-t}\}
\)
and
\[
    B=\{\boldsymbol{s}_1,\dots,\boldsymbol{s}_n\}
    \cup
    \{\boldsymbol{s}_i+\boldsymbol{s}_j-\boldsymbol{s}_1:n-t<i\le j\le n\}.
\]
Then \(|A|=n-t\) and \(|B|=n+\binom{t+1}{2}\), with the displayed elements of \(B\) being distinct by the linear independence of \(S\). Moreover, \(S+S\subseteq A+B\): sums involving one of \(\boldsymbol{s}_1,\dots,\boldsymbol{s}_{n-t}\) are immediate, while the remaining sums are represented as
\(
    \boldsymbol{s}_i+\boldsymbol{s}_j
    =
    \boldsymbol{s}_1+(\boldsymbol{s}_i+\boldsymbol{s}_j-\boldsymbol{s}_1)\) for
    \(n-t<i\le j\le n\).

Since the standard basis of \(\mathbb{Q}^{n}\) is also a basis of \(\mathbb{R}^{n}\), Theorem~\ref{thm:main} immediately yields the vector-model conjecture of Bukh, van Hintum and Keevash~\cite[Conjecture~13]{2025ATCAarith} and its extension.

\begin{cor}\label{cor:BHK}
Let \(A,B\subseteq \mathbb{Q}^{n}\), and suppose that
\(
    \{\boldsymbol{e}_i+\boldsymbol{e}_j:1\le i\le j\le n\}\subseteq A+B.
\)
If \(0\le t\le n-1\) and \(|A|\le n-t\), then
\(
    |B|\ge n+\binom{t+1}{2}.
\)
\end{cor}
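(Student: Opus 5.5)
The plan is to derive Corollary~\ref{cor:BHK} directly from Theorem~\ref{thm:main} as a specialization, so no new combinatorics is needed. Let \(S=\{\boldsymbol{e}_1,\dots,\boldsymbol{e}_n\}\) be the standard basis; it is a basis of \(\mathbb{R}^{n}\).

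The first step is to check that the hypothesis of the corollary is exactly \(S+S\subseteq A+B\). The sumset is
\[
    S+S=\{\boldsymbol{e}_i+\boldsymbol{e}_j:1\le i,j\le n\}.
\]
Since addition is commutative, this equals \(\{\boldsymbol{e}_i+\boldsymbol{e}_j:1\le i\le j\le n\}\), which the corollary assumes is contained in \(A+B\).

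The second step is to view \(A,B\subseteq\mathbb{Q}^{n}\) as subsets of \(\mathbb{R}^{n}\) via the inclusion \(\mathbb{Q}^{n}\subseteq\mathbb{R}^{n}\). This inclusion changes neither the cardinalities nor the sumset \(A+B\), since addition in \(\mathbb{Q}^n\) is the restriction of addition in \(\mathbb{R}^n\).

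With \(0\le t\le n-1\) and \(|A|\le n-t\), all hypotheses of Theorem~\ref{thm:main} now hold, and it gives \(|B|\ge n+\binom{t+1}{2}\). There is no real obstacle here; the only point needing care is the commutativity observation, which shows that the ordered index set \(i\le j\) loses nothing relative to the full sumset \(S+S\).
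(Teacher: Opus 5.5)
Your proposal is correct and matches the paper: the corollary is obtained by specializing Theorem~\ref{thm:main} to the standard basis \(S=\{\boldsymbol{e}_1,\dots,\boldsymbol{e}_n\}\), viewing \(A,B\subseteq\mathbb{Q}^n\) as subsets of \(\mathbb{R}^n\). Your explicit check, via commutativity, that \(S+S\) equals \(\{\boldsymbol{e}_i+\boldsymbol{e}_j:1\le i\le j\le n\}\) is a detail the paper leaves implicit.
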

Combined with their reduction from the change-of-domain problem to the vector model, Corollary~\ref{cor:BHK} shows the factor \(2\) in~\cite[Theorem~1]{2025ATCAarith} is best possible already for \(2\)-bases: for every \(n\), there exists a set of integers with a rational \(2\)-basis of size at most \(n\) whose smallest integer \(2\)-basis has size exactly \(2n\).

\section{Proof of Theorem~\ref{thm:main}}
If \(B\) is infinite, then the conclusion is immediate. Thus we may assume that \(B\) is finite. It suffices to prove the lower bound in the case \(S=\{\boldsymbol{e}_1,\dots,\boldsymbol{e}_n\}\), where \(\boldsymbol{e}_1,\dots,\boldsymbol{e}_n\) are the standard basis vectors. Indeed, if \(S=\{\boldsymbol{s}_1,\dots,\boldsymbol{s}_n\}\) is an arbitrary basis, then there is an invertible linear map sending \(\boldsymbol{s}_i\) to \(\boldsymbol{e}_i\) for every \(i\). Applying this map preserves cardinalities and sumset inclusions.

If \(n=1\), then \(t=0\), and the inclusion \(S+S\subseteq A+B\) implies that \(B\) is nonempty. Hence \(|B|\ge 1=n+\binom{t+1}{2}\). We may therefore assume that \(n\ge 2\).

For every pair \(1\le i\le j\le n\), the containment \(S+S\subseteq A+B\) guarantees at least one representation of \(\boldsymbol{e}_i+\boldsymbol{e}_j\) as a sum of an element of \(A\) and an element of \(B\). Choose and fix one such representation:
\[
\boldsymbol{e}_i+\boldsymbol{e}_j=\boldsymbol{a}_{ij}+\boldsymbol{b}_{ij},\qquad
    \boldsymbol{a}_{ij}\in A,\quad \boldsymbol{b}_{ij}\in B.
\]
Let \(A_0\) be the set of distinct vectors among the \(\boldsymbol{a}_{ij}\), and let \(B_0\) be the set of distinct vectors among the \(\boldsymbol{b}_{ij}\). Put
\(
    p=|A_0|\) and \(q=|B_0|.\)
Then
\(
    p\le |A|\le n-t\) and \(q\le |B|.
\)

We construct a bipartite graph \(G\) as follows. Its left vertex set is a copy of \(A_0\), and its right vertex set is a copy of \(B_0\); these two vertex sets are regarded as disjoint copies, even if the same vector occurs on both sides. For each \(1\le i\le j\le n\), put an edge \(\varepsilon_{ij}\) from the left vertex \(\boldsymbol{a}_{ij}\) to the right vertex \(\boldsymbol{b}_{ij}\). Later contractions may create parallel edges, so we allow multigraphs from this point on.

Define \(\varphi:V(G)\to \mathbb{R}^{n}\) by setting \(\varphi(\boldsymbol{a})=\boldsymbol{a}\) for left vertices \(\boldsymbol{a}\), and \(\varphi(\boldsymbol{b})=-\boldsymbol{b}\) for right vertices \(\boldsymbol{b}\). Then for every \(1\le i\le j\le n\),
\begin{equation}\label{eq:fromPhiStrong}
    \varphi(\boldsymbol{a}_{ij})-\varphi(\boldsymbol{b}_{ij})
    =\boldsymbol{a}_{ij}+\boldsymbol{b}_{ij}
    =\boldsymbol{e}_i+\boldsymbol{e}_j.
\end{equation}

\begin{claim}\label{claim:forest}
    The subgraph of \(G\) whose edge set is
    \(\{\varepsilon_{11},\dots,\varepsilon_{nn}\}\)
    is a forest.
\end{claim}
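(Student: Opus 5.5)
We argue by contradiction: a cycle among the edges $\varepsilon_{11},\dots,\varepsilon_{nn}$ would force a nontrivial linear relation among the vectors $2\boldsymbol{e}_i$. Note first that $\varepsilon_{ii}$ joins the left vertex $\boldsymbol{a}_{ii}$ to the right vertex $\boldsymbol{b}_{ii}$, so by \eqref{eq:fromPhiStrong} each such edge satisfies
\[
\varphi(\boldsymbol{a}_{ii})-\varphi(\boldsymbol{b}_{ii})=2\boldsymbol{e}_i .
\]
Since the edge labels $i$ are distinct, the subgraph has exactly one edge per index $i$. Because $G$ is a multigraph, a "cycle" here includes a pair of parallel edges (a $2$-cycle). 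Bipartiteness rules out loops, since every edge joins a left vertex to a right vertex.

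Suppose the subgraph contains a cycle. Traverse it as a closed walk $v_0,v_1,\dots,v_{k}=v_0$ using distinct edges $\varepsilon_{i_1i_1},\dots,\varepsilon_{i_ki_k}$ with $i_1,\dots,i_k$ pairwise distinct and $k\ge 2$. For each step, define the oriented difference $\varphi(v_{r-1})-\varphi(v_r)$. This equals $+2\boldsymbol{e}_{i_r}$ if the step goes from left to right, and $-2\boldsymbol{e}_{i_r}$ if it goes from right to left. Summing over the closed walk telescopes to zero:
\[
\sum_{r=1}^{k}\sigma_r\, 2\boldsymbol{e}_{i_r}=0,\qquad \sigma_r\in\{\pm1\}.
\]

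Since the indices $i_r$ are distinct and the $\boldsymbol{e}_i$ are linearly independent, every coefficient $2\sigma_r$ would have to vanish. This is impossible, so no cycle exists and the subgraph is a forest.

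No step here is a genuine obstacle. The only points needing care are that parallel edges count as cycles in the multigraph setting, and that each index appears on only one edge of the cycle. The latter holds because the $\varepsilon_{ii}$ are distinct edges with distinct labels, so no cancellation between equal basis vectors can occur.
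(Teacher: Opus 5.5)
Your proposal is correct and follows essentially the same argument as the paper. The paper also takes a cycle of diagonal edges with distinct indices and telescopes the \(\varphi\)-differences to obtain \(\sum_k \sigma_k\cdot 2\boldsymbol{e}_{i_k}=0\), which contradicts linear independence; your extra remarks about \(2\)-cycles and the absence of loops are harmless additions.
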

\begin{poc}
    Suppose not.  Then this subgraph contains a cycle \(\boldsymbol{v}_0,\boldsymbol{v}_1,\dots,\boldsymbol{v}_r=\boldsymbol{v}_0
    \)
    whose edges are distinct.  For each \(1\le k\le r\), let the edge
    \(\boldsymbol{v}_{k-1}\boldsymbol{v}_k\) be \(\varepsilon_{i_k i_k}\).
    Since the edges on the cycle are distinct and there is only one diagonal
    edge \(\varepsilon_{ii}\) for each \(i\), the indices
    \(i_1,\dots,i_r\) are distinct.

    By \eqref{eq:fromPhiStrong}, for each \(k\) we have
    \(
        \varphi(\boldsymbol{v}_{k-1})-\varphi(\boldsymbol{v}_k)
        =
        \sigma_k\cdot 2\boldsymbol{e}_{i_k}
    \)
    for some \(\sigma_k\in\{\pm1\}\), depending on which direction we traverse
    the edge \(\varepsilon_{i_k i_k}\).  Summing these identities over
    \(k=1,\dots,r\), the left-hand side becomes zero. Hence
    \[
        \sum_{k=1}^r \sigma_k\cdot 2\boldsymbol{e}_{i_k}=0.
    \]
    This is a nontrivial linear relation among the distinct basis vectors
    \(\boldsymbol{e}_{i_1},\dots,\boldsymbol{e}_{i_r}\), contradicting the
    linear independence of
    \(\boldsymbol{e}_1,\dots,\boldsymbol{e}_n\).
\end{poc}

Now contract all diagonal edges \(\varepsilon_{ii}\), meaning that each connected component of the subgraph formed by these edges is identified to a single vertex. By Claim~\ref{claim:forest}, these \(n\) edges form a forest. Hence contracting them reduces the number of vertices by exactly the number of contracted edges. Thus the resulting graph \(H\) satisfies
\begin{equation}\label{eq:fromForestStrong}
    |V(H)|=p+q-n.
\end{equation}

We color the vertices of \(H\) by elements of the quotient group \(\mathbb{R}^{n}/2\mathbb{Z}^{n}\). Let
\[
    \pi:\mathbb{R}^{n}\to \mathbb{R}^{n}/2\mathbb{Z}^{n}
\]
be the quotient map. If \(X\) is a vertex of \(H\), choose any vertex \(\boldsymbol{x}\) of \(G\) that was contracted to \(X\), and assign to \(X\) the color \(\pi(\varphi(\boldsymbol{x}))\). This is well-defined. Indeed, if two vertices \(\boldsymbol{x},\boldsymbol{y}\in V(G)\) are contracted to the same vertex of \(H\), then they are connected in \(G\) by a path consisting only of diagonal edges. Along each diagonal edge \(\varepsilon_{ii}\), the two \(\varphi\)-values differ by \(\pm 2\boldsymbol{e}_i\). Hence \(\varphi(\boldsymbol{x})-\varphi(\boldsymbol{y})\in 2\mathbb{Z}^{n}\), and so
\( \pi(\varphi(\boldsymbol{x}))=\pi(\varphi(\boldsymbol{y})).
\)

Let \(\mathcal C\) be the set of colors appearing on the vertices of \(H\), and let \(\mathcal X\subseteq \mathcal C\) be the set of colors of those vertices of \(H\) which contain at least one left vertex of \(G\). Since every color in \(\mathcal X\) is witnessed by at least one left vertex of \(G\), and distinct colors require distinct witnesses, we have
\begin{equation}\label{eq:X-size}
    |\mathcal X|\le p\le n-t.
\end{equation}

For every off-diagonal edge \(\varepsilon_{ij}\) with \(i<j\), let \(X\) and \(Y\) be the vertices of \(H\) containing the left endpoint \(\boldsymbol{a}_{ij}\) and the right endpoint \(\boldsymbol{b}_{ij}\), respectively. By \eqref{eq:fromPhiStrong}, the color of \(X\) minus the color of \(Y\) is
\[
    \pi(\varphi(\boldsymbol{a}_{ij}))-\pi(\varphi(\boldsymbol{b}_{ij}))
    =\pi(\boldsymbol{e}_i+\boldsymbol{e}_j).
\]
Consequently,
\begin{equation}\label{eq:coverStrong}
    \{\pi(\boldsymbol{e}_i+\boldsymbol{e}_j):1\le i<j\le n\}\subseteq \mathcal X-\mathcal C.
\end{equation}

We shall take advantage of the following lemma.

\begin{lemma}\label{lem:coset}
Let \(n\ge 2\) and \(\Gamma\) be an abelian group, and let \(K\le \Gamma\) be a subgroup identified with \(\mathbb{F}_2^n\). Write
\(
    D_n=\{\boldsymbol{e}_i+\boldsymbol{e}_j:1\le i<j\le n\}\subseteq K.
\)
Let \(u\) be an integer with \(0\le u\le n\). Suppose that \(X\subseteq C\subseteq \Gamma\), \(|X|\le n-u\), and
\(
    D_n\subseteq X-C.
\)
Then
\(
    |C|\ge n+\binom{u}{2}.
\)
\end{lemma}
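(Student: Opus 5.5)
The plan is induction on $n$, deleting one coordinate at a time and tracking how many elements of $C$ can be discarded. First I normalise. For every pair $i<j$ fix a representation $\boldsymbol e_i+\boldsymbol e_j=x_{ij}-c_{ij}$ with $x_{ij}\in X$ and $c_{ij}\in C$. This colours the edges of $K_n$ on vertex set $[n]$ by elements of $X$.

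Two structural facts will be used. For a fixed $x$, distinct edges of colour $x$ give distinct elements $c_{ij}=x-(\boldsymbol e_i+\boldsymbol e_j)$. A coincidence $x-d=x'-d'$ with $x\ne x'$ forces $x-x'=d-d'\in K$ to have Hamming weight $2$ or $4$ in $K\cong\mathbb F_2^n$. So collisions between colour classes occur only inside one coset of $K$ and are rigidly constrained. Writing $m=|X|$, it suffices to prove the bound with $u=n-m$, the largest admissible value.

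The inductive step works as follows. If $u=0$, I need $|C|\ge n$. I would prove this directly: among the $c_{ij}$ together with $X$ there are at least $n$ distinct elements. The reason is that if $x\in X$ has colour class $E_x$, then the elements $c_{ij}$ with $ij \in E_x$ are $|E_x|$ distinct elements. A graph with at most $|C|$ edges per colour, using few colours, cannot cover $K_n$ unless $|C|\ge n$; the weight-$2$/$4$ collision rule controls the overlaps between colours.

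For $u\ge1$, I delete a vertex $k\in[n]$. The hypothesis for $n-1$ and $u-1$ still holds, since $|X|\le (n-1)-(u-1)$. The target drops by $\bigl(n+\binom u2\bigr)-\bigl(n-1+\binom{u-1}{2}\bigr)=u$. So I must find $k$ and at least $u$ elements of $C\setminus X$ that are used only by star edges $\{i,k\}$. Removing them keeps $X\subseteq C'$ and $D_{n-1}\subseteq X-C'$.

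The candidate elements are $c_{ik}$ for $i\ne k$, of which there are $n-1$. An element $c_{ik}$ is lost as a candidate if it equals some $c_{ab}$ with $k\notin\{a,b\}$, or if it lies in $X$. I would double count over all $k$. Each coincidence $c_{ik}=c_{ab}$ with $\{a,b\}\cap\{i,k\}=\emptyset$ is a weight-$4$ relation. Each coincidence sharing a vertex is a weight-$2$ relation $x-x'=\boldsymbol e_k+\boldsymbol e_b$. Each such relation costs at most a bounded number of candidates, summed over all $k$.

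The weight-$2$ relations among the $m$ elements of $X$ should number at most about $m$ per coordinate pattern, because of linear independence of the differences in a forest-like structure. I would also aim to show that at most $|X|-1=n-u-1$ of the candidates are killed by membership in $X$. Then the average number of surviving candidates at a vertex is at least $u$. Choose $k$ attaining the average.

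The main obstacle is this averaging step: bounding, uniformly over $k$, how many star elements $c_{ik}$ coincide with elements needed elsewhere or with $X$. I would first try to show that the coincidence graph, where $x\sim x'$ if $x-x'\in K$ has weight $\le4$ and some collision occurs, forces a total loss of at most $(n-u-1)\cdot 2$ candidate incidences over all $k$. This would give an average surviving count of at least $(n-1)-2(n-u-1)/n\cdot(\text{const})\ge u$. If this crude average fails for small $u$, I would instead pick $k$ to be a vertex whose star uses only colours from a single $x$. Such a vertex exists by pigeonhole when $m<n$. Alternatively, handle the case where some $x$ covers a whole star by removing $x$ and its star simultaneously.
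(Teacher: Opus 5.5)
Your reduction to $u=n-m$ is fine. Under it the base case $u=0$ is trivial, since then $|C|\ge |X|=n$, so the covering argument you sketch there is not needed.

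\textbf{The gap is the inductive step.} You need a vertex $k$ such that at least $u$ elements of $C\setminus X$ are used only by edges at $k$. Neither of your selection rules produces one.

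The averaging claim is false already for $n=3$. Take $X=\{\boldsymbol{0},\boldsymbol{e}_1+\boldsymbol{e}_2\}$ and $C=X\cup\{\boldsymbol{e}_1+\boldsymbol{e}_3\}$, so $m=2$, $u=1$ and $|C|=3=n+\binom{u}{2}$. Here $c_{13}=c_{23}=\boldsymbol{e}_1+\boldsymbol{e}_3$ is forced and $c_{12}\in X$. Deleting $k=3$ frees one element, but deleting $k=1$ or $k=2$ frees none, so the average is $1/3<u$.

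The same happens in the whole extremal family with $m\ge 2$:
$X=\{\boldsymbol{0},\boldsymbol{e}_1+\boldsymbol{e}_2,\dots,\boldsymbol{e}_1+\boldsymbol{e}_m\}$ and $C=X\cup\{\boldsymbol{e}_1+\boldsymbol{e}_j:j>m\}\cup\{\boldsymbol{e}_i+\boldsymbol{e}_j:m<i<j\}$.
The $u$ vertices $k>m$ each free exactly $u$ elements, the other $m$ vertices free none, and the average is $u^2/n<u$.

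Your fallbacks do not repair this.
\begin{itemize}
\item Only stars at $k\le m$ (the bad vertices) can be monochromatic. A star at $k>m$ necessarily uses all $m$ colours. The pigeonhole existence claim is also unjustified.
\item Removing a colour $x$ together with a star breaks every representation outside the star that uses $x$.
\end{itemize}

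The count itself is also flawed. The $n-1$ candidates $c_{ik}$ need not be distinct; in the family above, all $c_{ik}$ with $i\le m$ coincide. So $(n-1)-(\text{losses})$ is off from the start. The bounds on weight-$2$ and weight-$4$ collisions are asserted rather than proved.

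\textbf{What is missing} is the global linear-algebraic fact that drives the bound, which local collision bookkeeping cannot see. After translating so that $\boldsymbol{0}\in X$, the span $W$ of $X$ has dimension at most $m-1$. The paper's proof runs as follows.
\begin{enumerate}
\item Fold each $K$-coset meeting $X$ back onto $K$. This is justified by exactly your observation that a representation of an element of $D_n$ stays inside one coset.
\item Pass to $\rho:\mathbb{F}_2^n\to\mathbb{F}_2^n/W$. Since $\rho(X)=\{\boldsymbol{0}\}$ and the quotient has exponent $2$, one gets $\rho(D_n)\subseteq\rho(C)$.
\item Choose indices so that $\rho(\boldsymbol{e}_{i_1}),\dots,\rho(\boldsymbol{e}_{i_s})$ is a basis of the quotient, with $s\ge n-m+1$. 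Their $\binom{s}{2}$ pairwise sums are distinct nonzero cosets, each meeting $C$.
\item The zero coset contains the $m$ elements of $X$.
\end{enumerate}
Hence $|C|\ge m+\binom{n-m+1}{2}=n+\binom{n-m}{2}\ge n+\binom{u}{2}$ directly, with no induction and no choice of a vertex to delete.
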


\begin{proof}[Proof of Lemma~\ref{lem:coset}]
First suppose that \(X,C\subseteq K=\mathbb{F}_2^n\). Since \(n\ge 2\), the set \(D_n\) is nonempty, and hence the containment \(D_n\subseteq X-C\) implies that \(X\) is nonempty. Choose \(\boldsymbol{x}_0\in X\) and replace \(X\) and \(C\) by \(X-\boldsymbol{x}_0\) and \(C-\boldsymbol{x}_0\), respectively. This preserves both the containment \(X\subseteq C\) and the difference set \(X-C\), so we may assume that \(\boldsymbol{0}\in X\).

Put \(m=|X|\), and let
\(
    W=\operatorname{span}_{\mathbb{F}_2}(X).
\)
Since \(\boldsymbol{0}\in X\), the space \(W\) is generated by the at most \(m-1\) nonzero elements of \(X\), and hence
\(
    \dim W\le m-1.
\)
Let
\(
    \rho:\mathbb{F}_2^n\to \mathbb{F}_2^n/W
\)
be the quotient map, and put \(s=\dim(\mathbb{F}_2^n/W)\). Then
\(
    s=n-\dim W\ge n-m+1.
\)
The point of passing to the quotient is that all elements of \(X\) collapse to the zero coset.  We shall use the following consequence of the covering assumption \(D_n\subseteq X-C\).
\begin{claim}\label{claim:containmentRho}
    \(
        \rho(D_n)\subseteq \rho(C).
    \)
\end{claim}
\begin{poc}
    Let \(\boldsymbol{d}\in D_n\).  By the assumption \(D_n\subseteq X-C\), we may write
    \(
        \boldsymbol{d}=\boldsymbol{x}-\boldsymbol{c}
    \)
    for some \(\boldsymbol{x}\in X\) and \(\boldsymbol{c}\in C\).  Since \(X\subseteq W\), we have \(\rho(\boldsymbol{x})=\boldsymbol{0}\), and therefore
    \[
        \rho(\boldsymbol{d})
        =
        \rho(\boldsymbol{x})-\rho(\boldsymbol{c})
        =
        -\rho(\boldsymbol{c})
        =
        \rho(\boldsymbol{c}),
    \]
    where the last equality uses that \(\mathbb{F}_2^n/W\) has exponent \(2\).  Thus \(\rho(\boldsymbol{d})\in \rho(C)\), as claimed.
\end{poc}

We now count the elements of \(C\) according to the \(W\)-cosets they occupy.  Since \(X\subseteq C\cap W\), the zero coset of \(W\) already contains at least the \(m\) elements of \(X\).  Moreover, by Claim~\ref{claim:containmentRho}, every nonzero element of \(\rho(D_n)\) corresponds to a nonzero \(W\)-coset that meets \(C\).  Hence
\[
    |C|\ge m+|\rho(D_n)\setminus\{\boldsymbol{0}\}|.
\]

Since the vectors
\(
    \rho(\boldsymbol{e}_1),\dots,\rho(\boldsymbol{e}_n)
\)
span \(\mathbb{F}_2^n/W\), we may choose indices
\(
    1\le i_1<\cdots<i_s\le n
\)
such that
\(  \rho(\boldsymbol{e}_{i_1}),\dots,\rho(\boldsymbol{e}_{i_s})
\)
form a basis of \(\mathbb{F}_2^n/W\).  The pairwise sums of these \(s\) basis vectors are nonzero and pairwise distinct. For every \(1\le a<b\le s\),
\[
    \rho(\boldsymbol{e}_{i_a})+\rho(\boldsymbol{e}_{i_b})
    =
    \rho(\boldsymbol{e}_{i_a}+\boldsymbol{e}_{i_b})
    \in \rho(D_n).
\]
Hence
\(
    |\rho(D_n)\setminus\{\boldsymbol{0}\}|
    \ge \binom{s}{2}.
\)
Moreover, since \(s=n-\dim W\ge n-m+1\) and \(m\le n-u\), we have
\[
    |C|\ge m+\binom{s}{2}\ge m+\binom{n-m+1}{2}= n+\binom{n-m}{2}\ge n+\binom{u}{2}.
\]
It remains to reduce the general case \(X\subseteq C\subseteq \Gamma\) to the case already proved, in which both sets lie inside \(K=\mathbb{F}_2^n\). The key observation is that, since every element of \(D_n\) lies in \(K\), any representation of an element of \(D_n\) as a difference \(\boldsymbol{x}-\boldsymbol{c}\), with \(\boldsymbol{x}\in X\) and \(\boldsymbol{c}\in C\), must use two elements from the same coset of \(K\).  We therefore fold each \(K\)-coset meeting \(X\) back onto \(K\).

For each coset \(\alpha+K\) meeting \(X\), choose an element
\(
    \boldsymbol{x}_\alpha\in X\cap(\alpha+K),
\)
and define
\[
    X_\alpha'
    =
    (X\cap(\alpha+K))-\boldsymbol{x}_\alpha
    \subseteq K,
    \qquad
    C_\alpha'
    =
    (C\cap(\alpha+K))-\boldsymbol{x}_\alpha
    \subseteq K.
\]
Let
\(
    X'=\bigcup_\alpha X_\alpha'\) and
    \(
    C'=\bigcup_\alpha C_\alpha',
\)
where the unions range over all cosets of \(K\) meeting \(X\).  Since \(X\subseteq C\), we have
\(
    X'\subseteq C'\subseteq K.
\)
Moreover,
\(
    |X'|\le |X|\) and
    \(
    |C'|\le |C|,
\)
because translation preserves cardinalities within each coset, while sets coming from distinct cosets may overlap after being translated into \(K\). We then claim that the covering property of \(D_n\) is preserved under this folding operation.
\begin{claim}\label{claim:DXC}
    \(D_n\subseteq X'-C'.\)
\end{claim}
\begin{poc}
    Let \(\boldsymbol{d}\in D_n\).  Since \(D_n\subseteq X-C\), we write
    \(
        \boldsymbol{d}=\boldsymbol{x}-\boldsymbol{c}
    \)
    for some \(\boldsymbol{x}\in X\) and \(\boldsymbol{c}\in C\).  As \(\boldsymbol{d}\in K\), the elements \(\boldsymbol{x}\) and \(\boldsymbol{c}\) lie in the same coset \(\alpha+K\).  Therefore
    \(
        \boldsymbol{x}-\boldsymbol{x}_\alpha\in X_\alpha'\) and
        \(
        \boldsymbol{c}-\boldsymbol{x}_\alpha\in C_\alpha',
    \)
    which further implies that
    \(
        \boldsymbol{d}
        =
        (\boldsymbol{x}-\boldsymbol{x}_\alpha)
        -
        (\boldsymbol{c}-\boldsymbol{x}_\alpha)
        \in X'-C'.
    \) This finishes the proof.
\end{poc}
By construction and Claim~\ref{claim:DXC}, the pair \(X',C'\) lies inside \(K\) and satisfies
\(
    X'\subseteq C',\) \(
    |X'|\le |X|\le n-u\) and
    \(D_n\subseteq X'-C'.
\)
Therefore the special case already proved for subsets of \(K=\mathbb{F}_2^n\) applies to \(X'\) and \(C'\), and yields
\(
    |C'|\ge n+\binom{u}{2}.
\)
Finally, the folding operation used to construct \(C'\) can only decrease cardinality, so 
\(
    |C|\ge |C'|\ge n+\binom{u}{2}.
\)
This completes the proof.
\end{proof}

We now apply Lemma~\ref{lem:coset} with
\( \Gamma=\mathbb{R}^{n}/2\mathbb{Z}^{n}\)
and \(K=\mathbb{Z}^{n}/2\mathbb{Z}^{n}\cong \mathbb{F}_2^n.\)
Moreover, set
\(
    X=\mathcal{X},\) \(C=\mathcal{C},\) and \(u=t.\)
Under the natural identification of \(K\) with \(\mathbb{F}_2^n\), the set \(D_n\) becomes
\[
    \{\pi(\boldsymbol{e}_i+\boldsymbol{e}_j):1\le i<j\le n\}.
\]
By definition, \(\mathcal X\subseteq \mathcal C\).  Moreover, \eqref{eq:X-size} gives
\(
    |\mathcal X|\le n-t,
\)
while \eqref{eq:coverStrong} gives
\(
    D_n\subseteq \mathcal X-\mathcal C.
\)
Hence all hypotheses of Lemma~\ref{lem:coset} are satisfied, and therefore
\(
    |\mathcal C|\ge n+\binom{t}{2}.
\)

Since \(\mathcal C\) is the set of colors appearing on the vertices of \(H\), we have
\(
    |\mathcal C|\le |V(H)|.
\)
Combining this with \eqref{eq:fromForestStrong}, we obtain
\[
    p+q-n
    =
    |V(H)|
    \ge
    |\mathcal C|
    \ge
    n+\binom{t}{2}.
\]
Thus
\(
    q\ge 2n+\binom{t}{2}-p.
\)
Finally, using \(p\le n-t\) and \(q\le |B|\), we get
\[
    |B|
    \ge q
    \ge
    2n+\binom{t}{2}-(n-t)
    =
    n+t+\binom{t}{2}
    =
    n+\binom{t+1}{2}.
\]
This completes the proof.

\section*{Acknowledgement}
The author is grateful to Yang Shu for many helpful comments on the proof.

\bibliographystyle{abbrv}
\bibliography{BHK}
\end{document}